\numberwithin{equation}{section}
\newtheorem{theorem}{Theorem}[section]
\newtheorem{proposition}[theorem]{Proposition}
\newtheorem{lemma}[theorem]{Lemma}
\newtheorem{corollary}[theorem]{Corollary}
\newtheorem{remark}[theorem]{Remark}
\theoremstyle{definition}
\newcommand\restr[2]{{
  \left.
  #1 
  \right|_{#2} 
  }}
\newcommand*\owedge{\mathpalette\@owedge\relax}
\newcommand*\@owedge[1]{%
  \mathbin{%
    \ooalign{%
      $#1\m@th\bigcirc$\cr
      \hidewidth$#1\m@th\wedge$\hidewidth\cr
    }%
  }%
}
\newcommand{\divergence}{\mathrm{div}}
\newcommand{\extd}[0]{\mathrm{d}}
\newcommand{\riem}{\mathrm{Rm}}
\newcommand{\ric}{\mathrm{Rc}}
\newcommand{\rscal}{\mathrm{Sc}}
\newcommand{\genmet}{\mathcal{G}}
\newcommand{\genscal}{\mathcal{S}c}
\newcommand{\genfullric}{\overline{\mathcal{R}c}}
\newcommand{\genric}{\mathcal{R}c}
\newcommand{\genriem}{\mathcal{R}m}
\newcommand{\scalbrack}[1]{\left\langle #1 \right\rangle}
\newcommand{\scalprod}[2]{\scalbrack{#1, #2}}
\newcommand{\scalprodmap}{\scalprod{\cdot}{\cdot}}
\newcommand{\End}{\mathrm{End}}
\newcommand{\Sym}{\mathrm{Sym}}
\newcommand{\absolute}[1]{\left\lvert #1 \right\rvert}
\newcommand{\trwith}[1]{\mathrm{tr}_{#1}\:}
\newcommand{\sym}{\mathrm{sym}}
\newcommand{\antisym}{\mathrm{antisym}}
\begin{document}
\pagenumbering{Roman}
\thispagestyle{empty}
\cleardoublepage
\setcounter{page}{1}
\thispagestyle{empty}

\cleardoublepage
\pagenumbering{arabic}

\title{The canonical generalised Levi-Civita connection and its curvature}

\date{\today}

\author{Vicente Cort\'es, Matas Mackevicius, Thomas Mohaupt, and Oskar Schiller}

\maketitle

\tableofcontents
\section*{Abstract}
Given a (semi-Riemannian) generalised metric $\mathcal G$ and a divergence operator $\mathrm{div}$ on an exact Courant algebroid $E$, we geometrically construct a canonical generalised Levi-Civita connection $D^{\mathcal G, \mathrm{div}}$ for these data.   In this way we provide a resolution of the problem of non-uniqueness 
of generalised Levi-Civita connections. Since the generalised Riemann tensor of  $D^{\mathcal G, \mathrm{div}}$ is 
an invariant of the pair $(\mathcal G, \mathrm{div})$,  we no longer need to 
discard curvature components which depend on the 
choice of the generalised connection.  As a main result we decompose   
the generalised Riemann curvature tensor of $D^{\mathcal G, \mathrm{div}}$ in terms of 
classical (non-generalised) geometric data. Based on this set of master formulas we derive
a comprehensive curvature tool-kit for applications in generalised geometry. 
This includes decompositions for the full generalised Ricci tensor, 
the generalised Ricci tensor, and three generalised scalar-valued curvature invariants, two of which are new.  

\textit{MSc classification}: 53D18 (Generalized geometry a la Hitchin); 83C10 (Equations of motion in general relativity and gravitational theory).\

\textit{Key words:} generalised metrics, generalised curvature, generalised Einstein equations, supergravity. 
\section{Introduction}
This article is a contribution to the field of generalised geometry understood as the 
study of geometric structures on Courant algebroids. We are specifically concerned 
with generalised (semi-Riemannian) metrics $\mathcal G$ on exact Courant algebroids $(E, \pi, \langle \cdot , \cdot \rangle, [\cdot , \cdot ])$ over a $d$-dimensional manifold $M$ and their generalised Levi-Civita connections, that is torsion-free metric generalised connections. 

Our first result is a canonical geometric construction 
of a generalised Levi-Civita connection $D^{\mathcal G, \mathrm{div}}$ for any prescribed divergence operator $\mathrm{div}$.  The basic idea of our construction
is that the Levi-Civita connection of the underlying semi-Riemannian metric $g$ (defined in Section \ref{LC:sec}) has a natural extension 
to a generalised Levi-Civita connection $D^0$ with metric divergence 
$\mathrm{div}^{\mathcal G}=\mathrm{div}^g \circ \pi$. In terms of $D^0$ the canonical generalised Levi-Civita connection $D^{\mathcal G, \mathrm{div}}$ is simply characterised by its divergence operator $\mathrm{div}$ and the property that the tensor $D^{\mathcal G, \mathrm{div}}-D^0$
is (point-wise) perpendicular to the kernel of the trace map \eqref{trace:eq}.  Note that 
$D^0=D^{\mathcal G, \mathrm{div}^{\mathcal G}}$.  As an obvious but important consequence, any invariant of $D^{\mathcal G, \mathrm{div}}$
is an invariant of the pair $(\mathcal G, \mathrm{div})$. 

By comparing the resulting formulas in terms of a splitting of the Courant algebroid, we show that 
$D^{\mathcal G, \mathrm{div}}$ coincides with a generalised 
Levi-Civita connection that appeared 
in the work of Garcia-Fernandez and Streets \cite{genricciflow}. The latter was obtained from the Gualtieri-Bismut generalised connection  and involves the classical Bismut connections $\nabla^\pm$ associated with the underlying three-form $H$ (and another pair of connections $\nabla^{\pm 1/3}$). Our approach has the advantage that the generalised Levi-Civita connection $D^{\mathcal G, \mathrm{div}}$ is canonical by construction. Moreover, it provides a new conceptual perspective on, and a fast track to, the final formulas of \cite{genricciflow}. 

The main results of this paper are the general formulas for the curvatures of the canonical generalised Levi-Civita connection $D^{\mathcal G, \mathrm{div}}$. These provide a powerful and comprehensive curvature tool-kit for practitioners of generalised geometry. Our formulas express the generalised Riemann curvature of $D^{\mathcal G, \mathrm{div}}$ in terms of the following data: 
\begin{enumerate}
    \item 
the semi-Riemannian metric $g$ and its Riemann tensor $\riem$,  
\item the fields $H$,  
$e=2(X+\xi )\in \Gamma (\mathbb{T}M)$, and  
their covariant derivatives, \end{enumerate}where $\langle e, \cdot \rangle =\mathrm{div}^\mathcal{G}-\mathrm{div}$. Here we have identified the Courant algebroid $E$ with the generalised 
tangent bundle $\mathbb{T}M=TM \oplus T^*M$ by means of the generalised metric $\mathcal G$. 
The basic theorem for the generalised Riemann tensor, from which 
the remaining curvature formulas are derived, is the following summary of Theorems~\ref{puretyperiemprop} and \ref{mixedtyperiemprop}. To state it recall that the generalised metric 
defines a decomposition $E=E_+\oplus E_-$ into eigenbundles of $\mathcal{G}^{\mathrm{End}} = \eta^{-1}\mathcal G$, 
where $\eta^{-1}: E^* \to E$ is the inverse of the scalar product $\eta = \langle \cdot , \cdot  \rangle$.

\begin{theorem} The generalised Riemann curvature $\genriem^{D}$ of the canonical generalised Levi-Civita connection 
$D=D^{\mathcal G, \mathrm{div}}$ is a sum of the following two tensors in 
$\Sym^2\,\Lambda^2\,E^*_+\oplus \Sym^2\,\Lambda^2\,E^*_-$ (pure-type) and $(E_+^*\wedge E_-^*)\vee \Lambda^2\,E^*_+ \oplus 
    (E_-^*\wedge E_+^*)\vee \Lambda^2\,E^*_-$ (mixed-type).
The pure-type component of $\genriem^{D}$ is  given by
    \begin{equation*}
        \begin{split}
            &\pm\genriem^{D}(a,b,v,w) \\
            &= \riem(a,b,v,w)  - \frac{1}{36} H^{(2)}(a,v,b,w) - \frac{1}{36}H^{(2)}(b,v,w,a) - \frac{1}{18} H^{(2)}(v,w,a,b) \\
            &\quad \pm \frac{1}{2(d-1)} \left\{ [D^0_v \chi_\pm^{e_\pm}](w,b,a) - [D^0_w \chi_\pm^{e_\pm}](v,b,a)\right. \\
            &\qquad\qquad\qquad \left. + [D^0_b \chi_\pm^{e_\pm}](a,v,w) - [D^0_a \chi_\pm^{e_\pm}](b,v,w) \right\} \\
            &\quad + \frac{1}{2(d-1)^2} \left\{2 \genmet(e_\pm,e_\pm) \big[\genmet(w,a)\genmet(v,b)-\genmet(v,a)\genmet(w,b)\big] \right. \\
            &\qquad\qquad\qquad + \genmet(a,e_\pm) \big[\genmet(w,b)\genmet(v,e_\pm)-\genmet(v,b)\genmet(w,e_\pm)\big] \\
            &\qquad\qquad\qquad + \left.\genmet(b,e_\pm) \big[\genmet(v,a)\genmet(w,e_\pm)-\genmet(w,a)\genmet(v,e_\pm)\big]\right\},
        \end{split}
    \end{equation*}
    where $a,b,v,w \in \Gamma(E_\pm)$ are all of the same type, $D^0$ is the canonical generalised Levi-Civita connection with metric divergence and the remaining notation is explained in Section~\ref{metric_divergence_section}. 
    
        The mixed-type component of $\genriem^{D}$ is given by
    \begin{equation*}
        \begin{split}
            &\pm2 \genriem^D(a,\Bar{b},v,w) \\
            &= \riem(a,\Bar{b},v,w) \mp \frac{1}{2}[\nabla_{a} H](\Bar{b},v,w) \pm \frac{1}{6}[\nabla_{\Bar{b}} H](a,v,w) \\
            &\quad - \frac{1}{12} H^{(2)}(\Bar{b},w,a,v) - \frac{1}{12} H^{(2)}(w,a,\Bar{b},v) - \frac{1}{6} H^{(2)}(a,\Bar{b},v,w) \\
            &\quad \pm \frac{1}{d-1}  [D^0_{\Bar{b}} \chi_\pm^{e_\pm}](a,v,w).            
        \end{split}
    \end{equation*}
    Herein, $a,v,w \in \Gamma(E_\pm)$ are all of the same type and $\Bar{b}\in \Gamma(E_\mp)$.
\end{theorem} 

Tracing the above formulas we obtain corresponding expressions for the full generalised 
Ricci tensor (see Corollaries \ref{genric_puretype_cor} and \ref{genriccor}) and two scalar curvatures (Corollaries~\ref{traG:cor} and \ref{trE:cor}). The formulas for the pure-type components of the  generalised Riemann and full Ricci tensor are new and the same holds for the mixed components of the  generalised Riemann in the case of general divergence operator. In addition, we define a generalised version of the Kretschmann 
scalar, see Proposition \ref{Kretsch:prop}.  The second generalised scalar curvature and the generalised Kretschmann 
scalar are new scalar-valued invariants. 

We expect that the general curvature formulas obtained in this paper will serve the community as  
fundamental tools for the study of any problem in mathematics or  physics 
involving the generalised Riemann tensor. First applications were already 
given in \cite{vicenteoskar} and include the derivation of the energy and momentum 
constraints for the generalised Einstein equations and the characterisation of 
exact Courant algebroids with generalised metrics and divergence operators for which the canonical Levi-Civita connection is flat. Another related application described there is the extension of the fundamental theorem of hypersurface theory to the setting of Hitchin's generalised geometry.

Our results will be useful for future applications of generalised geometry in theories of gravitation, including string theory and supergravity. 
T-duality, which when acting with compact orbits is a symmetry of string theory, 
can be used more broadly as a solution-generating technique in gravitational theories \cite{Buscher:1987sk,Giveon:1994fu}.
Generalised geometry allows one to describe the action of T-duality in a manifestly
covariant way for exact and more general Courant algebroids \cite{Cavalcanti-Gualtieri,Baraglia-Hekmati,Cortes-David_Tduality}. 
One concrete future application of our results is the generalised geometry of
non-extreme Killing horizons and of their T-duals, following 
\cite{Medevielle:2023jmn}. 
The Riemann tensor of the canonical generalised Levi-Civita connnection is the 
basic building block for the curvature invariants appearing in the actions and field 
equations of higher derivative gravitational theories which exhibit T-duality. Such 
theories may be considered as fundamental or effective theories (derived from 
theories of quantum gravity such as string theory). We refer to 
\cite{Ozkan:2024euj} for a recent review of higher-derivative gravity with detailed references. 
A possible future application of our work is to develop a systematic theory of generalised curvature invariants with 
applications to curvature singularities and extendability in the context of generalised geometry. 
One observation in our paper is that not only
standard supergravity and string theory, but also the generalised supergravity
described in \cite{Arutyunov:2015mqj} fits naturally into our formalism.\footnote{This refers to a deformation of supergravity where an additional vector field is introduced, which is required 
to satisfy a list of conditions, including 
to be a Killing vector field for the metric satisfying the deformed field
equations, see 
\cite{Arutyunov:2015mqj} for details. 
To avoid a conflict of terminology, we will call this deformation 
vector-deformed supergravity instead of generalised supergravity.}  
A natural question, only touched upon in this paper, is the geometrical meaning (within generalised geometry)
of the equations defining such deformed versions of supergravity and whether further 
generalisations are mathematically and physically meaningful.

Finally, we briefly describe the structure of the paper and its main results.  In Section \ref{LC:sec} we define  
\emph{the canonical generalised Levi-Civita connection} $D^{{\mathcal G},\mathrm{div}}$  for a given generalised metric $\mathcal G$ and 
divergence operator $\mathrm{div}$ on an exact Courant algebroid $(E,\pi , \langle \cdot , \cdot  \rangle, [\cdot ,\cdot ])$.  In the special case that $\mathrm{div}$ is the metric divergence operator $\mathrm{div}^{\mathcal G}=\mathrm{div}^{g}\circ \pi$, the canonical generalised Levi-Civita connection $D^0$ is directly obtained from the Levi-Civita connection of the induced semi-Riemannian metric $g$ by extending it to a generalised connection and subtracting the 
torsion.  For the case of general divergence, we add the unique tensor $S$ in the first generalised prolongation 
$\mathfrak{so}(E)_{\mathcal G}^{\langle 1\rangle}$ which is perpendicular to the 
subspace of trace-free tensors and which has the prescribed trace $\mathrm{tr}\, S := (v\mapsto \mathrm{tr}\, S v)= \mathrm{div}-\mathrm{div}^{\mathcal G}\in \Gamma (E^*)$. Thus we obtain the canonical generalised Levi-Civita connection as $D^{{\mathcal G},\mathrm{div}}=D^0+S$. 

In Sections \ref{metric_divergence_section} and \ref{curv_gen:sec} we compute the Riemann tensor of the 
canonical generalised Levi-Civita connection in terms of the ordinary Riemann tensor, the three-form $H$ 
(encoding the Courant algebroid structure) and the generalised dilaton 
\[ e=\eta^{-1}(\mathrm{div}^\mathcal{G}-\mathrm{div}).\] In section \ref{metric_divergence_section}  we treat the case $e=0$ (metric divergence) obtaining 
the pure- and mixed-type components of the generalised Riemann tensor in Theorems \ref{curv_zero_dilat:thm} and \ref{curv_zero_dilat_mixed:thm}, respectively. As part of our calculations, in Proposition \ref{D_can_coeff:prop} and Proposition~\ref{formulaS:prop}, we determine explicit expressions for the 
canonical generalised Levi-Civita connection in terms of the identifications $\pi|_{E_\pm} : E_\pm\cong TM$. In particular, we compute the above tensor $S$ explicitly, hereby relating it to the literature in  Remark~\ref{can_conn:rem}. In Section \ref{curv_gen:sec} we establish the general curvature formulas 
for arbitrary divergence, which constitute the main results of our paper. As part of our complete analysis 
we establish also formulas for the full generalised Ricci tensor, the generalised Ricci tensor and  
the two independent traces of the full generalised Ricci tensor. In Section \ref{Comparison:sec} we provide expressions for the various contractions of the generalised Riemann tensor in terms of components. After specialisation to the cases of standard and of vector-deformed supergravity, we compare 
to expressions obtained previously in the literature. In the case where the divergence operator is encoded in a dilaton function (i.e.\ $X=0$ and $\xi = 2 d\varphi$), we match components of curvature tensors with 
\cite{Coimbra:2011nw} and thus
recover the NS-NS action and field equations of string theory. A similar matching is provided for the case of 
vector-deformed supergravity \cite{Arutyunov:2015mqj}. We show that part of the consistency conditions for vector-deformed supergravity consist in the vanishing of the second generalised scalar curvature. In particular, the second generalised scalar curvature vanishes for metric divergence.

\textbf{Acknowledgements.}   Research of VC is funded by the Deutsche For\-schungs\-gemeinschaft 
(DFG, German Research Foundation) under Germany's Excellence Strategy, EXC 2121 ``Quantum Universe,'' 390833306 and under -- SFB-Gesch\"afts\-zeichen 1624 -- Projektnummer 506632645.
TM would like
to thank the Department of Mathematics of the University of Hamburg for support
and hospitality during several visits when work for this paper was undertaken. The
work of MM is supported by an STFC PGR studentship. The work of TM is supported by the STFC Consolidated
Grant ST/T000988/1. Data access statement: There is no additional data beyond
what is included in this paper.

\newtheorem{Th}{Theorem}[section]  
\newtheorem{Prop}[Th]{Proposition}
\newtheorem{Lem}[Th]{Lemma} 
\newcommand{\bt}{\begin{theorem}\ \ }  
\newcommand{\et}{\end{theorem}}  
\newcommand{\bp}{\begin{proposition}\ \ }  
\newcommand{\ep}{\end{proposition}}  
\newcommand{\bl}{\begin{lemma}\ \ }  
\newcommand{\el}{\end{lemma}}  
\newcommand{\pf}{\noindent{\it Proof:\ \ }}  
\renewcommand{\square}{\kern1pt\vbox  
               {\hrule height 0.6pt\hbox{\vrule width 0.6pt\hskip 3pt  
    \vbox{\vskip 6pt}\hskip 3pt\vrule width 0.6pt}\hrule height0.6pt}  
    \kern1pt}  

\renewcommand{\arraystretch}{1.3}

\newtheorem{note}{Note}
\newenvironment{Note}{\begin{note} \rm }{\end{note} }
\newcommand{\myref}[1]{\ref{#1} on p. \pageref{#1}}


\section{The canonical LC generalised connection}
\label{LC:sec}
Let $(E, \langle \cdot , \cdot \rangle, [\cdot , \cdot ])$ be an exact 
Courant algebroid over $M$ with anchor map $\pi : E \to TM$. 
Let $\mathcal G$ be a (semi-Riemannian) generalised   metric on $E$ with corresponding orthogonal 
decomposition $E=E_+ \oplus E_-$. See \cite[Definition 2.1]{CortesKrusche} for the definition of a 
semi-Riemannian generalised metric. From this definition it follows 
that the restriction $\pi_+ := \pi|_{E_+} : E_+  \to TM$ is an isomorphism.
This gives rise to a semi-Riemannian metric $g$ on $M$ such that $(\pi_+)^*g =  \mathcal G|_{E_+\times E_+}$. Then 
$\pi_- := \pi|_{E_-} : E_-  \to TM$ is also an isomorphism and $(\pi_-)^*g =  \mathcal G|_{E_-\times E_-}$.  
\bt For any divergence operator $\mathrm{div} : \Gamma (E) \to C^\infty (M)$ there exists a canonical 
generalised Levi-Civita connection $D=D^{\mathcal G, \mathrm{div}}$ such that $\mathrm{div}^D = \mathrm{div}$. 
\et 

\begin{proof}
We consider first the special case $\mathrm{div}= \mathrm{div}^\mathcal{G}$, where $\mathrm{div}^\mathcal{G}$ is the so-called metric divergence operator.
It is defined as follows. Let $\phi : E \to 
(\mathbb{T}M, [\cdot , \cdot ]_H)$ be the isomorphism of Courant algebroids determined by the generalised metric, i.e.\ 
\[ \phi|_{E_\pm} = (1\pm g) \circ \pi_\pm.\]
Here $[\cdot , \cdot ]_H$ denotes the standard Dorfman bracket twisted by a closed $3$-form $H$ such that $\phi^*[ \cdot ,\cdot  ]_H = [\cdot , \cdot ]$. 
The metric divergence operator $\mathrm{div}^\mathcal{G} : E \to C^\infty (M)$ is defined by 
\[ \mathrm{div}^\mathcal{G}(v) = \mathrm{tr} ( \nabla_{\pi}  \phi (v)),\quad v\in \Gamma (E), \] 
where $\nabla$ denotes the Levi-Civita connection in $\mathbb{T}M=TM \oplus T^*M$. More explicitly, decomposing $v=v_++v_-$ into 
eigenvectors of $\mathcal G^{\End}$ we have $\phi (v) = X+gX + Y -gY$, where $X=\pi_+(v_+)$, $Y=\pi_-(v_-)$, and hence 
\[ \mathrm{div}^\mathcal{G}(v)  = \mathrm{div}^g(X+Y) =  \mathrm{div}^g(\pi (v)) .\]
So 
\[ \mathrm{div}^\mathcal{G} = \mathrm{div}^g \circ \pi ,\]
where $\mathrm{div}^g  : \Gamma (TM) \to C^\infty (M)$ is the ordinary divergence operator associated with the metric $g$.

We start by defining a metric generalised connection $D^0$ using the Levi-Civita connection $\nabla$ of $g$: 
\begin{eqnarray*} 
D^0_u|_{\Gamma (E_+)} &:=& \pi_+^{-1} \circ \nabla_{\pi (u)}\circ \pi_+, \quad u\in \Gamma (E_+),\\
D^0_u|_{\Gamma (E_-)} &:=& \pi_-^{-1} \circ \nabla_{\pi (u)}\circ \pi_-, \quad u\in \Gamma (E_-),\\
D^0_u|_{\Gamma (E_-)} &:=& [u, \cdot ]_-, \quad u\in \Gamma (E_+),\\
D^0_u|_{\Gamma (E_+)} &:=& [u, \cdot ]_+, \quad u\in \Gamma (E_-),
\end{eqnarray*} 
where the subindex $\pm$ on the bracket denotes projection from $E$ onto $E_\pm$. 
It is  straightforward to compute the torsion $T^0$ of $D^0$ and we obtain that 
$T^0$ is of pure-type, i.e.\ $T^0 \in \Gamma (\wedge^3E_+^* \oplus \wedge^3E_-^*)$ and is given by 
\[ T^0(u,v,w) = - \langle [u,v],w\rangle \mp \pi (v) g (\pi (u), \pi (w))\pm \sum_{\mathrm{cycl}} g(\nabla_{\pi (u)}\pi (v),\pi (w))  ,\]
whenever $u,v,w\in \Gamma (E_+)$ or $u,v,w\in \Gamma (E_-)$. 
Now we define a canonical Levi-Civita generalised connection by
\[ D := D^0 -\frac13 T^0.\]
Since $T^0$ is totally skew-symmetric it is trace-free and $\mathrm{div}^D = \mathrm{div}^{D^0}$.
The latter obviously coincides with the divergence operator of the metric generalised connection 
\begin{equation} \label{pm_gen_conn:eq}\pi_+^{-1} \circ \nabla_{\pi }\circ \pi_+ + \pi_-^{-1} \circ \nabla_{\pi }\circ \pi_-.\end{equation} 
We claim that it coincides with the metric divergence operator $\mathrm{div}^\mathcal{G}$. 
We compute the divergence operator of \eqref{pm_gen_conn:eq} evaluated on $v=v_++v_-\in \Gamma (E)$ 
in terms of the vector fields $X=\pi_+(v_+), Y=\pi_-(v_-)$:
\begin{eqnarray*}\mathrm{div}^D(v) &=& \mathrm{tr}(\pi_+^{-1} \circ \nabla_{\pi }X + \pi_-^{-1} \circ \nabla_{\pi }Y )\\
&=& \mathrm{tr}_{E_+} (\pi_+^{-1} \circ \nabla_{\pi_+ }X) + 
\mathrm{tr}_{E_-} (\pi_+^{-1} \circ \nabla_{\pi_- }Y)\\ 
&=& \mathrm{div}^g (X) + \mathrm{div}^g (Y) = \mathrm{div}^g(\pi (v))=\mathrm{div}^\mathcal{G} (v).\end{eqnarray*}
We have proven that $D$ is a canonical generalised Levi-Civita  connection with $\mathrm{div}^D = \mathrm{div}^\mathcal{G}$. 

Next we consider an arbitrary divergence operator 
\[ \mathrm{div}= \mathrm{div}^\mathcal{G} + \alpha,\quad \alpha \in \Gamma (E^*).\]
Any generalised Levi-Civita  connection with the divergence $\mathrm{div}$ is of the 
form $D+S$, where $S\in \mathfrak{so}(E)_{\mathcal G}^{\langle 1\rangle}$ has $\mathrm{tr} (Sv) = \alpha (v)$ for all 
$v\in E$. Here 
$\mathfrak{so}(E)_{\mathcal G}$ denotes the stabilizer of $\mathcal G$ in $\mathfrak{so}(E)$ and 
$\mathfrak{so}(E)_{\mathcal G}^{\langle 1\rangle}:=(\mathfrak{so}(E)_{\mathcal G})^{\langle 1\rangle}$ denotes its first generalised prolongation \cite{cortesdavid}. 
This first generalised prolongation is described in detail in the proof of Lemma \ref{nondeg:lem} below. 
We denote the map $v\mapsto \mathrm{tr}(Sv)$ simply by $\mathrm{tr}(S)$. 
In the next lemma we show that the kernel of the map 
\begin{equation} \label{trace:eq}\mathrm{tr} : \mathfrak{so}(E)_{\mathcal G}^{\langle 1\rangle} \to E^*,\quad S \mapsto \mathrm{tr}(S)\end{equation}
is nondegenerate with respect to the scalar product $\langle \cdot, \cdot \rangle$. 
Therefore its orthogonal bundle 
\[ \mathcal K := (\ker  \mathrm{tr})^{\perp}\]
is a canonical complementary subbundle. The bundle $\mathcal K$ 
has a unique section $S^\alpha$ such that $\mathrm{tr}(S^\alpha)=\alpha$. 
This proves that $D+S^\alpha$ is a canonical generalised Levi-Civita  connection with $\mathrm{div}^{D+S_\alpha}=\mathrm{div}$. 
\end{proof}
\bl \label{nondeg:lem} The kernel of the trace-map 
\[ \mathrm{tr} : \mathfrak{so}(E)_{\mathcal G}^{\langle 1\rangle} \to E^*,\quad S \mapsto \mathrm{tr}(S)\]
is nondegenerate with respect to the metric induced by $\mathcal G$ and with respect to the metric induced by $\langle \cdot, \cdot \rangle$ as well. 
\el 

\begin{proof} 
Since  $\mathfrak{so}(E)_{\mathcal G}^{\langle 1\rangle} = \mathfrak{so}(E_+)^{\langle 1\rangle}\oplus \mathfrak{so}(E_-)^{\langle 1\rangle}$, it 
is sufficient to consider the two restrictions 
\[ \mathrm{tr} : \mathfrak{so}(E_\pm)^{\langle 1\rangle} \to E_\pm^*.\]
The proof is analogous for both of them and reduces to the following algebraic statement. 
Consider a pseudo-Euclidean vector space $V$. Then we claim that the kernel of 
\begin{equation} \label{tr:eq}\mathrm{tr} : \mathfrak{so}(V)^{\langle 1\rangle} \to V^*\end{equation}
is nondegenerate with respect to the induced scalar product. 
To prove this we decompose $V=L \oplus P$ as an orthogonal 
sum of a negative definite subspace $L$  and a positive definite
subspace $P$. We decompose
\begin{eqnarray} \label{one:eq}
V^* \otimes \wedge^2 V^* &=&
(L^* \otimes \wedge^2 L^*) \oplus
(L^* \otimes \wedge^2 P^*) \oplus
(P^* \otimes L^* \wedge P^*)   \oplus\\
&& \label{two:eq}
(L^* \otimes L^* \wedge P^*) \oplus
(P^* \otimes \wedge^2 L^*) \oplus
(P^* \otimes \wedge^2 P^*),
\end{eqnarray}
where the induced scalar product is negative definite on 
terms in the first line and positive definite on terms in the
second line. Next we show that 
\[ \mathfrak{so}(V)^{\langle 1\rangle} = \{ S \in V^* \otimes \wedge^2 V^*\mid \partial S =0 \} \subset V^* \otimes \wedge^2 V^*\] 
is a nondegenerate 
subspace, where $\partial : V^* \otimes \wedge^2 V^* \to \wedge^3 V^*$ is given by 
\[ (\partial S)(u,v,w)=\sum_{\mathrm{cycl}} S(u,v,w).\]  
It clear that the equation $\partial S =0$ decouples into four independent equations corresponding to the 
tensor power of $L$ in the decomposition \eqref{one:eq}-\eqref{two:eq}. Since \eqref{one:eq} collects the 
terms of odd degree in $L$ and \eqref{two:eq} those of even degree, the equation $\partial S =0$ defines 
a subspace which is a direct sum of a subspace $\mathfrak{so}(V)^{\langle 1\rangle}_{\mathrm{odd}}$ of \eqref{one:eq}  and a subspace  $\mathfrak{so}(V)^{\langle 1\rangle}_{\mathrm{even}}$ of \eqref{two:eq}. 
The latter subspaces are definite and therefore their sum is nondegenerate. Finally, the kernel of \eqref{tr:eq}
splits as 
\[ \ker \left(\mathrm{tr} : \mathfrak{so}(V)^{\langle 1\rangle}_{\mathrm{odd}} \to L^*\right) \oplus  \ker \left(\mathrm{tr} : \mathfrak{so}(V)^{\langle 1\rangle}_{\mathrm{even}} \to P^*\right) \]
and is therefore as well a sum of two definite subspaces. 
\end{proof} 
\section{Curvature computations in the case of metric divergence}\label{metric_divergence_section}
Recall (see \cite{vicenteoskar} and references therein) that the generalised Riemann tensor of a generalised Levi-Civita connection $D$ on $(E,\mathcal G)$ is defined as follows:
\begin{equation}\label{genriemdefeq}
    \begin{split}
        &\genriem^D(a,b,v,w) \coloneqq \langle \genriem^D(v,w)b,a\rangle\\ 
        &:= \frac{1}{2} \Big \{ \scalbrack{(D^2_{v,w}-D^2_{w,v})b, a} + \scalbrack{(D^2_{b,a}-D^2_{a,b})v,w} 
         - \trwith{E} (\scalbrack{D v, w} \scalbrack{Db,a})\Big\},\\
    \end{split}
    \end{equation}
    where $a,b,v,w\in \Gamma (E)$. It is a section of $\Sym^2\,\Lambda^2\,E^*$ and satisfies the Bianchi identity.
    We will use that it is uniquely determined by its pure-type components in $\Sym^2\,\Lambda^2\,E^*_+\oplus \Sym^2\,\Lambda^2\,E^*_-$ and its mixed-type components in $(E_+^*\wedge E_-^*)\vee \Lambda^2\,E^*_+ \oplus 
    (E_-^*\wedge E_+^*)\vee \Lambda^2\,E^*_-$. 
The proof of the following proposition is straightforward.
\begin{proposition}
Let $D$ be the canonical generalised Levi-Civita connection with metric divergence. 
Then we have the following connection coefficients for any sections $u ,v , w\in \Gamma (E)$ 
with constant scalar products. 
\begin{enumerate}
\item
Pure type coefficients, i.e.\ $u ,v , w\in \Gamma (E_+)$ or $u ,v , w\in \Gamma (E_-)$:
\[ \langle D_uv,w\rangle = \left(\pm \Gamma + \frac13 \mathcal B \mp\frac13 \partial \Gamma\right)(u,v,w),\]
where we have abbreviated $\mathcal B(u,v,w) := \langle [u,v],w\rangle$, 
$\Gamma (u,v,w ) := g(\nabla_{\pi u}\pi v,\pi w)$ and $(\partial \Gamma) (u,v,w ) = \sum_{\mathrm{cycl}}\Gamma (u,v,w)$.
\item Mixed-type coefficients, i.e.\ $u\in \Gamma (E_\pm)$ and $\Bar{v},\Bar{w} \in \Gamma (E_\mp )$: 
\[ \langle D_u\Bar{v},\Bar{w}\rangle =  \mathcal B (u,\Bar{v},\Bar{w}).\]
\end{enumerate}
Without evaluating on $w$ these formulas can be written as follows. 
Pure type coefficients: 
\[ D_uv = \pi_{\pm}^{-1}\nabla_{\pi u}\pi v + \frac13 [u,v]_\pm -\frac13 (\pi_{\pm}^{-1}\nabla_{\pi u}\pi v  - \pi_{\pm}^{-1}\nabla_{\pi v}\pi u 
- \mathcal{G}^{-1} g(\nabla_{\pi_\pm } \pi u , \pi v)).\]
The last term can be rewritten as 
 \[ \mathcal{G}^{-1} g(\nabla_{\pi_\pm} \pi u , \pi v) = \pi_{\pm}^{-1}g^{-1}\left(g(\nabla \pi u , \pi v)\right)\in \Gamma (E).\] 
 Mixed-type coefficients:
 \[ D_u\Bar{v} =  [u,\Bar{v}]_\mp .\]
\ep
We rewrite the above formulas such that the Dorfman coefficients $\mathcal B(u,v,w)$ are eliminated by
expressing them in terms of the three-form coefficients $(\pi^*H)(u,v,w) = H(\pi u , \pi v, \pi w)$ and the 
connection coefficients $\Gamma (u,v,w ) = g(\nabla_{\pi u}\pi v,\pi w)$ of $\nabla$. 
\bp \label{D_can_coeff:prop}Let $D$ be the canonical generalised Levi-Civita connection with metric divergence. 
Then we have the following connection coefficients for any sections $u ,v , w\in \Gamma (E)$ 
with constant scalar products. 
\begin{enumerate}
\item Pure-type coefficients:
\[ \langle D_uv,w\rangle = \left( \frac16 (\pi^*H) \pm  \Gamma\right)(u,v,w),\quad u,v,w\in \Gamma (E_\pm).\]
\item Mixed-type coefficients: 
\[  \langle D_u\Bar{v},\Bar{w}\rangle = \left(\frac12 (\pi^*H) -  \Gamma\right)(u,\Bar{v},\Bar{w}),\quad u\in \Gamma (E_+),\quad \Bar{v},\Bar{w}\in \Gamma (E_- ),\]
\[  \langle D_u\Bar{v},\Bar{w}\rangle = \left(\frac12 (\pi^*H) +\Gamma \right)(u,\Bar{v},\Bar{w}),\quad u\in \Gamma (E_-),\quad \Bar{v},\Bar{w}\in \Gamma (E_+ ).\]
\end{enumerate}
\end{proposition}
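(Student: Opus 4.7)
The plan is to substitute the formulas from the previous proposition and replace $\mathcal{B}(u,v,w)=\langle [u,v],w\rangle$ by an expression involving only $\pi^{*}H$ and $\Gamma$. The key identity to derive is
\[
\mathcal{B}\;=\;\tfrac12\,\pi^{*}H\;\pm\;\partial\Gamma\ \ \text{(pure-type, on sections with constant pairings)},\qquad
\mathcal{B}\;=\;\tfrac12\,\pi^{*}H\;\mp\;\Gamma\ \ \text{(mixed-type)}.
\]
Once established, these identities plug directly into the previous proposition and give the stated coefficients after the numerical simplification $\Gamma\pm\frac13\mathcal B\mp\frac13\partial\Gamma=\Gamma+\frac13(\mathcal B\mp\partial\Gamma)=\Gamma+\frac16\pi^{*}H$ in the pure $E_+$ case, and analogously in the other cases.

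To derive the identities I would transport everything to $(\mathbb{T}M,[\cdot,\cdot]_{H})$ via the isomorphism $\phi$. Writing $X=\pi u,\ Y=\pi v,\ Z=\pi w$, for $u,v,w\in\Gamma(E_{\pm})$ one has $\phi(u)=X\pm gX$ etc., so that
\[
[\phi(u),\phi(v)]_{H}=[X,Y]\pm\mathcal L_{X}(gY)\mp\iota_{Y}d(gX)+\iota_{Y}\iota_{X}H.
\]
Pairing with $\phi(w)=Z\pm gZ$ and expanding the Lie derivative and exterior derivative produces six terms of the form $Xg(Y,Z)$ and $g(A,[B,C])$, plus the $H$-term. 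I would then write $\Gamma$ and $\partial\Gamma$ out via Koszul's formula
\[
2g(\nabla_{X}Y,Z)=Xg(Y,Z)+Yg(X,Z)-Zg(X,Y)+g([X,Y],Z)-g(Y,[X,Z])-g(X,[Y,Z])
\]
and compare. In the mixed cases all derivative-of-metric terms $Xg(Y,Z)$ etc.\ cancel identically against the Koszul expansion of $\Gamma$, leaving only $H(X,Y,Z)$; this yields $\mathcal{B}(u,\bar v,\bar w)=\tfrac12(\pi^{*}H)(u,\bar v,\bar w)\mp\Gamma(u,\bar v,\bar w)$ with no hypothesis on the sections. In the pure cases the derivative terms do \emph{not} cancel on their own, but under the assumption of constant pairings one has $\langle u,v\rangle=\pm g(X,Y)$ (and similarly for the other pairs), hence $Xg(Y,Z)=Yg(X,Z)=Zg(X,Y)=0$, which causes the remaining terms to collapse to the cyclic sum $g([X,Y],Z)+g([Y,Z],X)+g([Z,X],Y)$ on both sides; this gives the pure identity.

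The main (and essentially only) obstacle is bookkeeping: keeping track of the signs coming from $\phi|_{E_\pm}=(1\pm g)\circ\pi_\pm$ and from the Courant pairing convention, and correctly reducing six Lie-bracket-of-vector-field terms modulo total antisymmetrization. Once the four sign-choice identities for $\mathcal{B}$ are in hand, the proof reduces to an arithmetic check of the coefficients $\pm1,\pm\tfrac13$ from the previous proposition, which is immediate.
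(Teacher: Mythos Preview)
Your proposal is correct and follows essentially the same route as the paper: transport to $(\mathbb{T}M,[\cdot,\cdot]_H)$ via $\phi$, compute $\mathcal B$ explicitly from the twisted Dorfman bracket, and substitute into the previous proposition. The only cosmetic difference is that the paper rewrites $\mathcal L_X(gY)$ and $\iota_Y d(gX)$ using the identity $(\mathcal L_Xg)Y=2g((\nabla X)^{\mathrm{sym}}Y)$ together with $g(\mathcal L_XY,Z)=\Gamma(u,v,w)-\Gamma(v,u,w)$, whereas you collapse the same terms via the Koszul formula; both reductions yield $\mathcal B=\tfrac12\pi^*H\pm\partial\Gamma$ in the pure case. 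Your additional remark that the mixed-type identity $\mathcal B=\tfrac12\pi^*H\mp\Gamma$ holds without the constant-pairing hypothesis is correct and slightly sharper than what the paper states (it only says the mixed case is ``similar'').
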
 
\begin{proof} For $u,v,w\in \Gamma (E_\pm)$ we have
\[ \langle D_uv,w\rangle = \left( \pm \Gamma + \frac13 \mathcal B \mp \frac13 \partial \Gamma\right)(u,v,w).\]
To compute $\mathcal B(u,v,w)$ we begin by computing $[u,v]$ in terms of the vector fields $X=\pi u$, $Y=\pi v$ using that $g(X,Y)$ is constant and 
$(\mathcal{L}_Xg)Y = 2 g ( (\nabla X)^{\mathrm{sym}}Y)$: 
\begin{eqnarray*} [u,v] &=& [X\pm gX,Y\pm gY] =\mathcal{L}_X(Y\pm gY)\mp \iota_Yd(gX)+H(X,Y,\cdot )\\ 
&=&\mathcal{L}_X(Y\pm gY)\mp \mathcal{L}_Y(gX)+H(X,Y,\cdot )\\
&=& \mathcal{L}_XY \pm (\mathcal{L}_Xg)Y\mp (\mathcal{L}_Yg)X \pm g(\mathcal{L}_XY-\mathcal{L}_YX) + H(X,Y,\cdot )\\
&=&  \mathcal{L}_XY \pm  2 g ( (\nabla X)^{\mathrm{sym}}Y) \mp 2 g ( (\nabla Y)^{\mathrm{sym}}X)\pm  2g(\mathcal{L}_XY) + H(X,Y,\cdot ).
\end{eqnarray*}
Taking the scalar product with $w=Z\pm gZ$ and using that $g(\mathcal{L}_XY,Z)=\Gamma (u,v,w) -\Gamma (v,u,w)$ we obtain:
\begin{eqnarray*} \mathcal B(u,v,w) &=&  \pm \langle \mathcal{L}_XY , gZ\rangle \pm  2 \langle g ( (\nabla X)^{\mathrm{sym}}Y),Z\rangle \mp 2
\langle g ( (\nabla Y)^{\mathrm{sym}}X),Z\rangle\\
&& \pm \; 2 \langle g(\mathcal{L}_XY), Z\rangle + \langle H(X,Y,\cdot ), Z\rangle\\
&=& \pm \frac12 g(\mathcal{L}_XY,Z) \pm  g ( (\nabla X)^{\mathrm{sym}}Y,Z) \mp g( (\nabla Y)^{\mathrm{sym}}X,Z)\\ &&\pm \; g(\mathcal{L}_XY,Z)+\frac12 H(X,Y,Z)\\
&=& \pm \frac32  g(\mathcal{L}_XY,Z)  \pm \frac12(\Gamma (v, u, w)+ \Gamma (w, u, v) - \Gamma (u, v, w)-\Gamma (w, v,u))\\ &&+\;\frac12(\pi^*H)(u,v,w)\\
&=& \pm \left( \Gamma (u,v,w) -\Gamma (v,u,w)+\Gamma (w,u,v)\right) + \frac12(\pi^*H)(u,v,w)\\
&=& \left( \pm \partial \Gamma + \frac12(\pi^*H)\right)(u,v,w).
\end{eqnarray*}
This implies the claimed formula for the pure type coefficients. The formulas for the mixed-type coefficients can be obtained similarly. 
\end{proof} 
\begin{remark}\label{can_conn:rem}
It follows from the formulas of Proposition \ref{D_can_coeff:prop} that the canonical generalised Levi-Civita connection $D$ with metric divergence  coincides with the generalised Levi-Civita connection of \cite{genricciflow} constructed out of the (torsion-full) Gualtieri-Bismut generalised connection. Next we show that the
canonical generalised Levi-Civita connection
$D^{\mathcal{G},
\mathrm{div}}$ with divergence $\mathrm{div}$ 
has the form  
$D+S$, where $S$ 
is precisely the  tensor \eqref{Chi-tensor:eq} considered in \cite{genricciflow}. This shows that the 
generalised Levi-Civita connection $D+S$ is canonically associated with the pair $(\mathcal{G},
\mathrm{div})$. 
\end{remark}
\begin{proposition} \label{formulaS:prop} Let $(\mathcal G, \mathrm{div})$ be a generalised metric and  divergence operator on an exact Courant algebroid $E\to M$, $D$ the canonical generalised Levi-Civita connection with metric divergence and $e\in \Gamma (E)$ defined by $\langle e, \cdot \rangle =\mathrm{div}^\mathcal{G}-\mathrm{div}$. Then the tensor 
$S= D^{\mathcal{G},
\mathrm{div}}-D$
is given by 
\begin{equation} \label{Chi-tensor:eq} S=\frac{\chi_+^{e_+} + \chi_-^{e_-}}{\dim M - 1},\end{equation} 
where $\chi_\pm^{e_\pm} \in \Gamma(\mathfrak{so}(E_\pm)^{\langle 1\rangle })$ is defined by      
            \[ \chi_\pm^{e_\pm}(a,b) \coloneqq \scalbrack{a,b} e_\pm - a \scalbrack{e_\pm, b}, \qquad\quad a,b \in \Gamma(E_\pm),
         \] 
         in terms of the projections $e_\pm \in \Gamma (E_\pm)$ of $e$.
\end{proposition}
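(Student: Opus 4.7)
The plan is to identify $S := (\chi_+^{e_+}+\chi_-^{e_-})/(\dim M-1)$ with the distinguished tensor $S^\alpha$ constructed in the proof of the main theorem of Section~\ref{LC:sec}, taking $\alpha := \mathrm{div}-\mathrm{div}^{\mathcal G}\in \Gamma(E^*)$, so by hypothesis $\alpha = -\langle e,\cdot\rangle$. According to that construction, $D^{\mathcal G,\mathrm{div}}-D$ is \emph{uniquely} characterised as the section of $\mathcal K=(\ker\mathrm{tr})^{\perp} \subset \mathfrak{so}(E)_{\mathcal G}^{\langle 1\rangle}$ whose image under the trace map is $\alpha$. It therefore suffices to establish three properties of $S$: membership in $\mathfrak{so}(E)_{\mathcal G}^{\langle 1\rangle}$, the trace identity $\mathrm{tr}(S)=\alpha$, and orthogonality to $\ker\mathrm{tr}$.

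First I would check that each $\chi_\pm^{e_\pm}$ lies in $\mathfrak{so}(E_\pm)^{\langle 1\rangle}$: skew-symmetry of $\langle \chi_\pm^{e_\pm}(a,b),c\rangle$ in $(b,c)$ and the cyclic identity $\sum_{\mathrm{cycl}}\langle \chi_\pm^{e_\pm}(a,b),c\rangle=0$ both follow by pairwise cancellation directly from the expression $\langle a,b\rangle\langle e_\pm,c\rangle-\langle e_\pm,b\rangle\langle a,c\rangle$. Combined with the direct-sum decomposition $\mathfrak{so}(E)_{\mathcal G}^{\langle 1\rangle}=\mathfrak{so}(E_+)^{\langle 1\rangle}\oplus \mathfrak{so}(E_-)^{\langle 1\rangle}$ used in the proof of Lemma~\ref{nondeg:lem}, this puts $S$ in $\mathfrak{so}(E)_{\mathcal G}^{\langle 1\rangle}$.

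Next I would compute $\mathrm{tr}(\chi_\pm^{e_\pm})$ in a local orthonormal frame $\{\varepsilon_i\}$ of $E_\pm$ with signs $\epsilon_i=\langle \varepsilon_i,\varepsilon_i\rangle\in\{\pm 1\}$. Writing $\mathrm{tr}(\chi_\pm^{e_\pm})(v)=\sum_i\epsilon_i\langle \chi_\pm^{e_\pm}(\varepsilon_i,v),\varepsilon_i\rangle$, the first summand of $\chi_\pm^{e_\pm}$ contributes $\sum_i \epsilon_i\langle \varepsilon_i,v\rangle \langle e_\pm,\varepsilon_i\rangle = \langle e_\pm,v\rangle$ by the dual-basis identity $v=\sum_i\epsilon_i\langle v,\varepsilon_i\rangle \varepsilon_i$, while the second contributes $-(\dim M)\langle e_\pm,v\rangle$ from the trace of $\mathrm{id}_{E_\pm}$. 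Hence $\mathrm{tr}(\chi_\pm^{e_\pm})=-(\dim M-1)\langle e_\pm,\cdot\rangle$, and the factor $(\dim M-1)^{-1}$ in the definition of $S$ is tuned precisely so that $\mathrm{tr}(S)=-\langle e_+ + e_-,\cdot\rangle=-\langle e,\cdot\rangle=\alpha$.

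The main obstacle is the orthogonality $\chi_\pm^{e_\pm}\perp \ker(\mathrm{tr}|_{\mathfrak{so}(E_\pm)^{\langle 1\rangle}})$. The cleanest conceptual viewpoint is that $e_\pm\mapsto \chi_\pm^{e_\pm}$ is, up to the nonzero scalar $-(\dim M-1)$ just computed, the metric adjoint of the surjection $\mathrm{tr}: \mathfrak{so}(E_\pm)^{\langle 1\rangle}\to E_\pm^*$, whose image is by definition $(\ker\mathrm{tr})^{\perp}$. Concretely I would verify this by pairing $\chi_\pm^{e_\pm}$ with an arbitrary $T\in \mathfrak{so}(E_\pm)^{\langle 1\rangle}$ in the same orthonormal frame: the two resulting triple contractions both reduce, after exploiting the skew-symmetry of $T$ in its last two slots, to a contraction of $e_\pm$ with $\mathrm{tr}(T)$, so both vanish whenever $T\in \ker\mathrm{tr}$. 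This puts $\chi_\pm^{e_\pm}$, and hence $S$, in $\mathcal K$. Combined with the trace identity, the uniqueness statement from the main theorem forces $S=S^\alpha$, i.e.\ $D^{\mathcal G,\mathrm{div}}=D+S$.
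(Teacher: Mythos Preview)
Your proof is correct and follows the same strategy as the paper's: verify that the proposed $S$ is the unique section of $\mathcal K=(\ker\mathrm{tr})^\perp$ with trace $\alpha$, by pairing $\chi_\pm^{e_\pm}$ against trace-free $T$ in an orthonormal frame. You spell out all three defining conditions (membership in $\mathfrak{so}(E)_{\mathcal G}^{\langle 1\rangle}$, the trace identity, orthogonality to $\ker\mathrm{tr}$) more explicitly than the paper, which records only the orthogonality computation; one small caveat is that in reducing the first of your two triple contractions you need not only the skew-symmetry of $T$ in its last two slots but also the cyclic condition $\partial T=0$ defining the prolongation.
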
 
\begin{proof}
We need to check that $S$ given by \eqref{Chi-tensor:eq} satisfies 
$\langle S,T\rangle =0$ for all 
\[ T\in \ker\, ( \mathrm{tr} : \mathfrak{so}(E)^{\langle 1\rangle }_\mathcal{G}\to E^*).\] 
It suffices to check this for $T\in \mathfrak{so}(E_\pm )^{\langle 1\rangle }_0$, where the 
index zero stands for trace-free tensors. The proofs for $E_+$ and $E_-$ being similar, we restrict to 
$E_+$ and compute the scalar product  by means of an orthonormal basis $(u_i)$  of $E_+|_p$ at some point $p\in M$:  
\begin{eqnarray*} \langle S,T\rangle &=& \sum_{ijk} \chi_+ (u_i,u_j,u_k) T_{ijk}\varepsilon_i\varepsilon_j\varepsilon_k\\
&=& \sum_{ijk}(\varepsilon_i\varepsilon_k\delta_{ij}e_+^k - \varepsilon_i\varepsilon_j\delta_{ik}e_+^j)T_{ijk}\varepsilon_i\varepsilon_j\varepsilon_k\\
&=& \sum_{ijk} T_{ijk}(\varepsilon_j\delta_{ij}e_+^k-\varepsilon_k\delta_{ik}e_+^j)\\
&=& 0,\end{eqnarray*}
where $\chi_+=\chi_+^{e_+}$, $T_{ijk}=T(u_i,u_j,u_k)$, $e_+^i=\varepsilon_i\langle e_+,u_i\rangle$ and $\varepsilon_i = \langle u_i,u_i\rangle$.  In the last step we used that $T$ is trace-free. 
\end{proof}

Recall that the generalised Riemann tensor is a sum of its pure-type part in $\Sym^2\,\Lambda^2\,E^*_+\oplus \Sym^2\,\Lambda^2\,E^*_-$ and its mixed-type part in $(E_+^*\wedge E_-^*)\vee \Lambda^2\,E^*_+ \oplus 
    (E_-^*\wedge E_+^*)\vee \Lambda^2\,E^*_-$. These two components are determined separately in the following 
    two theorems. 
\bt \label{curv_zero_dilat:thm} The pure-type part of the Riemann tensor of the canonical generalised Levi-Civita connection $D$ with metric divergence is given by the following 
formulas.
For all $a,b,v,w \in \Gamma (E_+)$ we have
\begin{eqnarray*} \genriem^{D}(a,b,v,w) &=& g( \mathrm{Rm}(v,w)b,a) + \frac{1}{36}H^{(2)}(a,v,w,b)\\ 
&&+\;\frac{1}{36}H^{(2)}(v,b,w,a) + 
\frac{1}{18} H^{(2)}(v,w,b,a),\end{eqnarray*}
where $H^{(2)}$ denotes the contraction of $H^{\otimes 2}\in \Gamma (\mathrm{Sym}^2\bigwedge^3T^*M)$ in the 
first and fourth argument using the metric $g$ and we use the notation $F(a,b,v,w) := F(\pi a, \pi b, \pi v, \pi w)$
for any tensor $F$ on $M$.  Similarly, for all $a,b,v,w \in \Gamma (E_-)$ we have 
\begin{eqnarray*} - \genriem^{D}(a,b,v,w) &=& g( \mathrm{Rm}(v,w)b,a) + \frac{1}{36}H^{(2)}(a,v,w,b)\\ 
&&+\;\frac{1}{36}H^{(2)}(v,b,w,a) + 
\frac{1}{18} H^{(2)}(v,w,b,a).\end{eqnarray*}
More compactly, we can write both cases in one formula as 
\begin{eqnarray*} \pm \genriem^{D}(a,b,v,w) &=& g( \mathrm{Rm}(v,w)b,a) + \frac{1}{36}H^{(2)}(a,v,w,b)\\ 
&&+\;\frac{1}{36}H^{(2)}(v,b,w,a) + 
\frac{1}{18} H^{(2)}(v,w,b,a),\end{eqnarray*}
where now the pure-type insertions are $a,b,v,w\in \Gamma (E_\pm)$. 
\et
\begin{proof}
We consider only the case $a,b,v,w\in \Gamma (E_+)$, since the other case is similar. We will compute each term in the following formula: 
\begin{eqnarray*} 2 \genriem^{D}(a,b,v,w) &=& \mathfrak{S} \langle D_vD_wb,a\rangle - \mathfrak{S} \langle D_{D_vw}b,a\rangle -\langle (Dv)^*w,(Db)^*a\rangle\\
&=&\mathfrak{S} \pi (v)\langle D_wb,a\rangle - \mathfrak{S} \langle D_wb,D_va\rangle - \mathfrak{S} \langle D_{D_vw}b,a\rangle\\
&& -\;\langle (Dv)^*w,(Db)^*a\rangle, 
 \end{eqnarray*}
where we use the notation 
\[ \mathfrak{S} L(v,w,b,a) := L(v,w,b,a) - L(w,v,b,a) + L(b,a,v,w) -L(a,b,v,w)\] 
for any differential operator $L : \Gamma (E^{\otimes 4}) \to C^{\infty}(M)$. Note in the second line $\mathfrak{S} \langle D_wb,D_va\rangle$ stands for 
$\mathfrak{S} L(v,w,b,a)$ with $L(v,w,b,a) = \langle D_wb,D_va\rangle$. 

For the calculations we can assume that the sections $a,b,v,w$ have constant scalar products and vanishing $D$-derivatives at
the point $p$ at which we compute the Riemann tensor. 
We begin with the case $a,b,v,w\in \Gamma (E_+)$ and compute term by term. From Proposition~\ref{D_can_coeff:prop} we have 
\[ \pi (v)\langle D_wb,a\rangle = \frac16 v H(w,b,a) + v\Gamma (w,b,a),\]
where $\Gamma (w,b,a):= g(\nabla_wb,a)$. At $p$ we obtain 
\begin{eqnarray*} \mathfrak{S} \pi (v)\langle D_wb,a\rangle|_p &=& \frac16 (dH)(v,w,b,a)|_p +2g( \mathrm{Rm}(v,w)b,a)|_p\\
&=& 2g( \mathrm{Rm}(v,w)b,a)|_p.\end{eqnarray*}
We compute the next term using an arbitrary local frame $(e_I) =(e_i, e_{\hat{j}})$ of $E$ adapted to the decomposition $E=E_+ \oplus E_-$ and the 
notation $(e^I)=(e^i, e^{\hat{j}})$ for the frame such that $\langle e^I, e_J\rangle = \delta^I_J$: 
\[ \langle D_wb,D_va\rangle = \sum_{I=1}^{2n} \langle D_wb,e_I\rangle \langle D_va, e^I\rangle = 
\sum_{i=1}^n \langle D_wb,e_i\rangle \langle D_va, e^i\rangle .\]
At $p$ this evaluates to 
\[ \langle D_wb,D_va\rangle|_p = \frac{1}{36}H^{(2)}(w,b,v,a)|_p .\]
Hence 
\begin{eqnarray*} -\mathfrak{S} \langle D_wb,D_va\rangle|_p &=&  -\frac{1}{36}H^{(2)}(w,b,v,a)|_p + \frac{1}{36}H^{(2)}(v,b,w,a)|_p\\
&& -\; \frac{1}{36}H^{(2)}(a,v,b,w)|_p 
+\frac{1}{36}H^{(2)}(b,v,a,w)|_p\\
&=& -\frac{1}{18}H^{(2)}(w,b,v,a)|_p + \frac{1}{18}H^{(2)}(v,b,w,a)|_p.\end{eqnarray*} 
Next we compute 
\[ \langle D_{D_vw}b,a\rangle = \sum_{I=1}^{2n}\langle {D_vw},e_I\rangle \langle D_{e^I}b,a\rangle  = \sum_{i=1}^n \langle {D_vw},e_i\rangle \langle D_{e^i}b,a\rangle,
 \] 
 which evaluates at $p$ to 
 \[ \langle D_{D_vw}b,a\rangle|_p = \frac{1}{36} H^{(2)}(v,w,b,a)|_p.\]
 Thus 
 \[ - \mathfrak{S} \langle D_{D_vw}b,a\rangle|_p = -\frac{1}{9}H^{(2)}(v,w,b,a)|_p.\] 
 Finally we compute 
 \begin{eqnarray*} \langle (Dv)^*w,(Db)^*a\rangle &=& \sum_{I=1}^{2n} \langle D_{e_I}v,w\rangle \langle D_{e^I}b,a\rangle\\
  &=& \sum_{i=1}^{n} \langle D_{e_i}v,w\rangle \langle D_{e^i}b,a\rangle + \sum_{j=1}^{n} \langle D_{e_{\hat j}}v,w\rangle \langle D_{e^{\hat j}}b,a\rangle .
  \end{eqnarray*} 
Using  Proposition~\ref{D_can_coeff:prop} and evaluating at $p$ we get
\begin{eqnarray*} \langle (Dv)^*w,(Db)^*a\rangle|_p &=& \frac{1}{36}H^{(2)}(v,w,b,a)|_p - \frac14 H^{(2)}(v,w,b,a)|_p\\
 &=& -\frac29 H^{(2)}(v,w,b,a)|_p,\end{eqnarray*}
where the minus sign is due to the fact that $\pi_- : E_-  \to (TM,g)$ is an isometry with respect to $\mathcal G$ and therefore an 
anti-isometry with respect to $\langle \cdot , \cdot \rangle$. 
This finishes the proof in the case $a,b,v,w\in \Gamma (E_+)$. 
\end{proof}

The following theorem can be extracted from \cite[§ 3.4]{genricciflow}. For completeness, we give an independent proof using the conventions and results of our paper. 
\bt \label{curv_zero_dilat_mixed:thm}
    The mixed-type components of the generalised Riemann curvature are given by
    \begin{equation*}
        \begin{split}
            &\pm2 \genriem^D(a,\Bar{b},v,w) \\
            &= \riem(a,\Bar{b},v,w) \mp \frac{1}{2}[\nabla_{a} H](\Bar{b},v,w) \pm \frac{1}{6}[\nabla_{\Bar{b}} H](a,v,w) \\
            &\quad - \frac{1}{12} H^{(2)}(\Bar{b},w,a,v) - \frac{1}{12} H^{(2)}(w,a,\Bar{b},v) - \frac{1}{6} H^{(2)}(a,\Bar{b},v,w), \\            
        \end{split}
    \end{equation*}
    where $a,v,w \in \Gamma(E_\pm)$ and $\Bar{b}\in \Gamma(E_\mp)$ arbitrary.
    \et
\begin{proof}
    We have to compute
    \begin{equation*}
        \pm2\genriem^D(a,\Bar{b},v,w) = \pm\scalbrack{D^2_{\Bar{b},a}v - D^2_{a,\Bar{b}}v,w}. \\    
    \end{equation*}
    We start with the first term.
    \begin{equation*}
        \begin{split}
            &\pm\scalbrack{D^2_{\Bar{b},a} v,w} \\
            &= \pm\scalbrack{D_{\Bar{b}}D_a v - D_{D_{\Bar{b}}a}v,w} \\
            &= g(\nabla_{\Bar{b}} D_a v,w) \pm \frac{1}{2}H(\Bar{b},D_a v, w) - g(\nabla_{D_{\Bar{b}}a}v,w) \mp \frac{1}{6}H(D_{\Bar{b}}a,v,w) \\
            &= g(\nabla^2_{\Bar{b},a} v,w)\pm \frac{1}{6}[\nabla_{\Bar{b}}(H(a,v))](w)  \pm \frac{1}{2} H(\Bar{b}, \nabla_a v, w) - \frac{1}{12}H^{(2)}(\Bar{b},w,a,v) \\
            &\quad  \mp \frac{1}{2} g(\nabla_{H(\Bar{b},a)}v,w) \mp \frac{1}{6}H(\nabla_{\Bar{b}}a,v,w)- \frac{1}{12}H^{(2)}(\Bar{b},a,v,w) \\
            &= g(\nabla^2_{\Bar{b},a} v,w)\mp \frac{1}{6}[\nabla_{\Bar{b}}(H(v))](a,w)  \pm \frac{1}{2} H(\Bar{b}, \nabla_a v, w) - \frac{1}{12}H^{(2)}(\Bar{b},w,a,v) \\
            &\quad  \mp \frac{1}{2} g(\nabla_{H(\Bar{b},a)}v,w) - \frac{1}{12}H^{(2)}(\Bar{b},a,v,w)
        \end{split}
    \end{equation*}
    Similarly
    \begin{equation*}
        \begin{split}
            &\pm\scalbrack{D^2_{a,\Bar{b}} v,w}\\
            &= \pm\scalbrack{D_aD_{\Bar{b}} v - D_{D_a{\Bar{b}}}v,w} \\
            &= g(\nabla_a D_{\Bar{b}} v,w) \pm \frac{1}{6}H(a,D_{\Bar{b}} v, w) - g(\nabla_{D_a {\Bar{b}}}v,w) \mp \frac{1}{2}H(D_a{\Bar{b}},v,w) \\
            &= g(\nabla^2_{a,\Bar{b}} v,w)\pm \frac{1}{2}[\nabla_a(H({\Bar{b}},v))](w)  \pm \frac{1}{6} H(a, \nabla_{\Bar{b}} v, w) - \frac{1}{12}H^{(2)}(a,w,{\Bar{b}},v) \\
            &\quad  \pm \frac{1}{2} g(\nabla_{H(a,\Bar{b})}v,w) \mp \frac{1}{2}H(\nabla_a{\Bar{b}},v,w)+ \frac{1}{4}H^{(2)}(a,\Bar{b},v,w) \\
            &= g(\nabla^2_{a,\Bar{b}} v,w)\mp \frac{1}{2}[\nabla_a(H(v))](\Bar{b},w)  \pm \frac{1}{6} H(a, \nabla_{\Bar{b}} v, w) - \frac{1}{12}H^{(2)}(a,w,{\Bar{b}},v) \\
            &\quad  \pm \frac{1}{2} g(\nabla_{H(a,\Bar{b})}v,w) + \frac{1}{4}H^{(2)}(a,\Bar{b},v,w) \\
        \end{split}
    \end{equation*}
    The result follows.
\end{proof}
\section{Curvatures for arbitrary divergence operator}
\label{curv_gen:sec} 
Let $E\to M$ be an exact Courant algebroid with semi-Riemannian generalised metric $\genmet$ and divergence operator $\divergence = \divergence^\genmet - \scalbrack{e, \cdot}$. In this section, we compute the components of the generalised Riemann tensor for the canonical generalised Levi-Civita connection $D=D^{\mathcal{G},\mathrm{div}}$. 

We write $D$ as $D^0+S$, where 
$D^0$ is the canonical generalised Levi-Civita connection with metric divergence and 
$S\in \Gamma (\mathfrak{so}(E)^{\langle 1\rangle }_\mathcal{G})$ is the unique section   
orthogonal to $\ker\, (\mathrm{tr} :  \mathfrak{so}(E)^{\langle 1\rangle }_\mathcal{G}\to E^*)$ such that $\mathrm{div}^\mathcal{G}+\mathrm{tr}\, S = \mathrm{div}$. According to Proposition \ref{formulaS:prop}, the tensor $S$ is given by the formula \eqref{Chi-tensor:eq}. 

From the decomposition $D = D^0 + S$, it follows that $\genriem^D$ is given by $\genriem^{D^0}$ plus some extra terms that involve $S$.  We already computed $\genriem^{D^0}$ in Section \ref{metric_divergence_section}. Thus, to obtain formulas for $\genriem^D$, it remains to calculate the extra terms. There are exactly two types of extra terms: terms involving a covariant derivative of $S$ and terms which are quadratic and algebraic in $S$. 
\begin{theorem}
    \label{puretyperiemprop}
    The pure-type components of the generalised Riemann curvature are given by
    \begin{equation*}
        \begin{split}
            &\pm\genriem^{D}(a,b,v,w) \\
            &= \riem(a,b,v,w)  - \frac{1}{36} H^{(2)}(a,v,b,w) - \frac{1}{36}H^{(2)}(b,v,w,a) - \frac{1}{18} H^{(2)}(v,w,a,b) \\
            &\quad \pm \frac{1}{2(d-1)} \left\{ [D^0_v \chi_\pm^{e_\pm}](w,b,a) - [D^0_w \chi_\pm^{e_\pm}](v,b,a)\right. \\
            &\qquad\qquad\qquad \left. + [D^0_b \chi_\pm^{e_\pm}](a,v,w) - [D^0_a \chi_\pm^{e_\pm}](b,v,w) \right\} \\
            &\quad + \frac{1}{2(d-1)^2} \left\{2 \genmet(e_\pm,e_\pm) \big[\genmet(w,a)\genmet(v,b)-\genmet(v,a)\genmet(w,b)\big] \right. \\
            &\qquad\qquad\qquad + \genmet(a,e_\pm) \big[\genmet(w,b)\genmet(v,e_\pm)-\genmet(v,b)\genmet(w,e_\pm)\big] \\
            &\qquad\qquad\qquad + \left.\genmet(b,e_\pm) \big[\genmet(v,a)\genmet(w,e_\pm)-\genmet(w,a)\genmet(v,e_\pm)\big]\right\} .
        \end{split}
    \end{equation*}
    Herein, $a,b,v,w \in \Gamma(E_\pm)$ arbitrary, we denoted $d \coloneqq \dim M$, and from now on we use the notation $\riem(a,b,v,w) \coloneqq \riem (\pi a,\pi b, \pi v, \pi w)$. 
\end{theorem}
\begin{proof}
    We calculate
    \begin{equation*}
        \begin{split}
            &D_v D_w b \\
            &= \left(D^0 + \frac{1}{d-1}\chi_\pm^{e_\pm}\right)_v \left(D^0 + \frac{1}{d-1}\chi_\pm^{e_\pm}\right)_w b \\
            &= D^0_v D^0_w b + \frac{1}{d-1}\chi_\pm^{e_\pm}(v, D^0_w b) + \frac{1}{d-1} D^0_v[\chi_\pm^{e_\pm}(w,b)] + \frac{1}{(d-1)^2} \chi_\pm^{e_\pm}(v,\chi_\pm^{e_\pm}(w,b)). 
        \end{split}
    \end{equation*}
    Similarly, 
    \begin{equation*}
        \begin{split}
            &D_{D_v w} b -  D^0_{D^0_v w} b \\
            &= \frac{1}{d-1}\chi_\pm^{e_\pm}(D^0_v w, b) + \frac{1}{d-1} D^0_{\chi_\pm^{e_\pm}(v,w)}b + \frac{1}{(d-1)^2} \chi_\pm^{e_\pm}(\chi_\pm^{e_\pm}(v,w),b) 
        \end{split}
    \end{equation*}
    and
    \begin{equation*}
        \begin{split}
            & \trwith{E}(\scalbrack{Dv,w}\scalbrack{Db,a}) - \trwith{E}(\scalbrack{D^0v,w}\scalbrack{D^0b,a}) \\
            & = \frac{1}{d-1}\trwith{E}\chi_\pm^{e_\pm}(\cdot,v,w)\scalbrack{D^0b,a} + \frac{1}{d-1} \trwith{E}\scalbrack{D^0v,w}\chi_\pm^{e_\pm}(\cdot,b,a) \\
            &\quad +\frac{1}{(d-1)^2} \trwith{E}\chi_\pm^{e_\pm}(\cdot,v,w)\chi_\pm^{e_\pm}(\cdot,b,a). \\
        \end{split}
    \end{equation*}
    Therefore, employing that $\sum_{\sigma(a,b,c)}\chi_\pm^{e_\pm}(a,b,c) = 0$,
    \begin{equation*}
        \begin{split}
            &2\genriem^D(a,b,v,w) -2\genriem^{D^0}(a,b,v,w) \\
            &= \frac{1}{d-1} \left\{ [D^0_v \chi_\pm^{e_\pm}](w,b,a) - [D^0_w \chi_\pm^{e_\pm}](v,b,a) + [D^0_b \chi_\pm^{e_\pm}](a,v,w) - [D^0_a \chi_\pm^{e_\pm}](b,v,w) \right\} \\
            &\quad + \frac{1}{d-1} \trwith{E}\left\{\scalbrack{D^0b,a} \left[\chi_\pm^{e_\pm}(w,v, \cdot)  - \chi_\pm^{e_\pm}(v,w, \cdot) - \chi_\pm^{e_\pm}(\cdot, v,w) \right]\right.\\
            &\qquad\qquad\qquad \left. +\scalbrack{D^0v,w} \left[\chi_\pm^{e_\pm}(a,b, \cdot)  - \chi_\pm^{e_\pm}(b,a, \cdot) - \chi_\pm^{e_\pm}(\cdot, b,a) \right] \right\} \\
            &\quad + \frac{1}{(d-1)^2} \left\{ \chi_\pm^{e_\pm}(v,\chi_\pm^{e_\pm}(w,b),a)- \chi_\pm^{e_\pm}(w,\chi_\pm^{e_\pm}(v,b),a)  \right. \\
            &\qquad\qquad\qquad   + \chi_\pm^{e_\pm}(b,\chi_\pm^{e_\pm}(a,v),w)- \chi_\pm^{e_\pm}(a,\chi_\pm^{e_\pm}(b,v),w)  \\
            &\qquad\qquad\qquad - \chi_\pm^{e_\pm}(\chi_\pm^{e_\pm}(v,w),b,a) + \chi_\pm^{e_\pm}(\chi_\pm^{e_\pm}(w,v),b,a) \\
            &\qquad\qquad\qquad - \chi_\pm^{e_\pm}(\chi_\pm^{e_\pm}(b,a),v,w) + \chi_\pm^{e_\pm}(\chi_\pm^{e_\pm}(a,b),v,w)  \\
            &\qquad\qquad\qquad \left.- \trwith{E}\chi_\pm^{e_\pm}(\cdot,v,w)\chi_\pm^{e_\pm}(\cdot,b,a)\right\} \\
            &= \frac{1}{d-1} \left\{ [D^0_v \chi_\pm^{e_\pm}](w,b,a) - [D^0_w \chi_\pm^{e_\pm}](v,b,a) + [D^0_b \chi_\pm^{e_\pm}](a,v,w) - [D^0_a \chi_\pm^{e_\pm}](b,v,w) \right\} \\
            &\quad + \frac{1}{(d-1)^2} \left\{ -\scalbrack{\chi_\pm^{e_\pm}(v,a),\chi_\pm^{e_\pm}(w,b)}+ \scalbrack{\chi_\pm^{e_\pm}(w,a),\chi_\pm^{e_\pm}(v,b)} \right. \\
            &\qquad\qquad\qquad - \scalbrack{\chi_\pm^{e_\pm}(b,w),\chi_\pm^{e_\pm}(a,v)} + \scalbrack{\chi_\pm^{e_\pm}(a,w),\chi_\pm^{e_\pm}(b,v)} \\
            &\qquad\qquad\qquad \left.+ \trwith{E}\chi_\pm^{e_\pm}(\cdot,v,w)\chi_\pm^{e_\pm}(\cdot,b,a)\right\} . \\
        \end{split}
    \end{equation*}
    Finally, we calculate
    \begin{equation*}
        \begin{split}
            &-\scalbrack{\chi_\pm^{e_\pm}(v,a),\chi_\pm^{e_\pm}(w,b)}+ \scalbrack{\chi_\pm^{e_\pm}(w,a),\chi_\pm^{e_\pm}(v,b)} \\
            &- \scalbrack{\chi_\pm^{e_\pm}(b,w),\chi_\pm^{e_\pm}(a,v)} + \scalbrack{\chi_\pm^{e_\pm}(a,w),\chi_\pm^{e_\pm}(b,v)} \\
            &+ \trwith{E}\chi_\pm^{e_\pm}(\cdot,v,w)\chi_\pm^{e_\pm}(\cdot,b,a) \\
            &= \left\{ 2\scalbrack{e_\pm,e_\pm} \big[\scalbrack{w,a}\scalbrack{v,b}-\scalbrack{v,a}\scalbrack{w,b}\big] \right. \\
            &\qquad + \scalbrack{a,e_\pm} \big[\scalbrack{w,b}\scalbrack{v,e_\pm}-\scalbrack{v,b}\scalbrack{w,e_\pm}\big] \\
            &\qquad \left.+ \scalbrack{b,e_\pm} \big[\scalbrack{v,a}\scalbrack{w,e_\pm}-\scalbrack{w,a}\scalbrack{v,e_\pm}\big]\right\} .
        \end{split}
    \end{equation*}
    Employing Theorem \ref{curv_zero_dilat:thm}, which asserts that 
    \begin{equation*}
        \begin{split}
            \pm \genriem^{D^0}(a,b,v,w) &= \riem(a,b,v,w) - \frac{1}{36} H^{(2)}(a,v,b,w) \\
            &\quad - \frac{1}{36}H^{(2)}(b,v,w,a) - \frac{1}{18} H^{(2)}(v,w,a,b), 
        \end{split}
    \end{equation*}
    the result follows.
\end{proof}
\begin{theorem}\label{mixedtyperiemprop}
    The mixed-type components of the generalised Riemann curvature are given by
    \begin{equation*}
        \begin{split}
            &\pm2 \genriem^D(a,\Bar{b},v,w) \\
            &= \riem(a,\Bar{b},v,w) \mp \frac{1}{2}[\nabla_{a} H](\Bar{b},v,w) \pm \frac{1}{6}[\nabla_{\Bar{b}} H](a,v,w) \\
            &\quad - \frac{1}{12} H^{(2)}(\Bar{b},w,a,v) - \frac{1}{12} H^{(2)}(w,a,\Bar{b},v) - \frac{1}{6} H^{(2)}(a,\Bar{b},v,w) \\
            &\quad \pm \frac{1}{d-1}  [D^0_{\Bar{b}} \chi_\pm^{e_\pm}](a,v,w)   .         
        \end{split}
    \end{equation*}
    Herein, $a,v,w \in \Gamma(E_\pm)$ and $\Bar{b}\in \Gamma(E_\mp)$ arbitrary, and we denoted $d \coloneqq \dim M$.
\end{theorem} 
\begin{proof}
    We calculate
    \begin{equation*}
        \begin{split}
            D_{\Bar{b}} D_a v &= D^0_{\Bar{b}} D^0_a v + \frac{1}{d-1} D^0_{\Bar{b}}[\chi_\pm^{e_\pm}(a,v)] ,\\
            D_a D_{\Bar{b}} v &= D^0_a D^0_{\Bar{b}} v + \frac{1}{d-1} \chi_\pm^{e_\pm}(a, D^0_{\Bar{b}}v)] .
        \end{split}
    \end{equation*}
    Similarly,
    \begin{equation*}
        \begin{split}
            D_{D_{\Bar{b}} a} v &=  D^0_{D^0_{\Bar{b}} a} v + \frac{1}{d-1}\chi_\pm^{e_\pm}(D^0_{\Bar{b}} a, v), \\
            D_{D_a \Bar{b}} v &=  D^0_{D^0_a \Bar{b}} v.\\
        \end{split}
    \end{equation*}
    Put together, we obtain
    \begin{equation*}
        \begin{split}
            2\genriem^D(a,\Bar{b},v,w) -2\genriem^{D^0}(a,\Bar{b},v,w) = \frac{1}{d-1}  [D^0_{\Bar{b}} \chi_\pm^{e_\pm}](a,v,w).
        \end{split}
    \end{equation*}
    Employing Theorem \ref{curv_zero_dilat_mixed:thm},  
    the result follows.
\end{proof}
As corollaries, we obtain the pure-type components of the full generalised Ricci tensor and then the generalised scalar curvature. The full generalised Ricci tensor is defined as the trace (taken with the inner product $\scalprodmap$)
\begin{equation*}
    \genfullric^D(a,b) \coloneqq \trwith{E} \genriem^D(\cdot,a,\cdot,b), \qquad\quad a,b \in \Gamma(E).
\end{equation*}
Given a generalised metric $\genmet$ and a divergence operator $\divergence$, we remark that the mixed-type components of the generalised Ricci curvature are the same for any choice of generalised Levi-Civita connection with divergence $\divergence$ \cite[Proposition 4.5]{streets2024genricciflow}. We denote them by
\begin{equation}\label{genric_def_eq}
    \genric^\pm(\genmet,\divergence) \coloneqq \restr{\genric^D}{E_\mp \times E_\pm} .
\end{equation}
The sum $\genric^+(\genmet,\divergence)+ \genric^-(\genmet,\divergence)\in \Gamma (E_+\vee E_-)$ is what we call the generalised 
Ricci tensor. Note that there are different but related definitions of generalised Ricci curvature in the literature. They have been compared in \cite{rubiogenricequivalence}. The comparison has been summarised in \cite[Remark 5.13]{vicenteoskar}.

Finally, from the pure-type components of $\genfullric^D$, we define the generalised scalar curvature
\begin{equation*}
    \genscal(\genmet,\divergence) \coloneqq \frac{1}{2}\trwith{\genmet}\genfullric^D.
\end{equation*}
We remark that (as for the generalised Ricci tensor) the generalised scalar curvature is the same for any generalised Levi-Civita connection with divergence $\divergence$ \cite[Lemma 4.11]{streets2024genricciflow}.
\begin{corollary}\label{genric_puretype_cor}
    The pure-type part of the generalised Ricci curvature is given by
    \begin{equation*}
        \begin{split}
            \restr{\genfullric^D(a,b)}{E_\pm \times E_\pm} &= \ric   - \frac{1}{12}H^{2}\pm \frac{\divergence_g(e_\pm)}{d-1}\:g \pm\frac{ d-2}{d-1} [g\nabla e_\pm]^\sym \\
            &\quad + \frac{3-2d}{2(d-1)^2} \absolute{e_\pm}^2_\genmet \: g +\frac{d-2}{2(d-1)^2}g\pi e_\pm \otimes g\pi e_\pm . \\
        \end{split}
    \end{equation*}
\end{corollary}
\begin{proof}
    This is a straightforward computation. Let $a,b \in \Gamma(E_\pm)$.
    \begin{equation*}
        \begin{split}
            &\genfullric^D(a,b) \\
            &= \trwith{E} \genriem^D(\cdot,a,\cdot,b) \\
            &= \trwith{E_\pm} \genriem^D(\cdot,a,\cdot,b) \\
            &= \trwith{g} \left\{\riem(\cdot,a,\cdot,b)  - \frac{1}{36} H^{(2)}(\cdot,\cdot,a,b) - \frac{1}{36}H^{(2)}(a,\cdot,b,\cdot) - \frac{1}{18} H^{(2)}(\cdot,b,\cdot,a) \right. \\
            &\quad \pm \frac{1}{2(d-1)} \left\{ [D^0_{(\cdot)} \chi_\pm^{e_\pm}](b,a,\cdot) - [D^0_b \chi_\pm^{e_\pm}](\cdot,a,\cdot)\right. \\
            &\qquad\qquad\qquad \left. + [D^0_a \chi_\pm^{e_\pm}](\cdot,\cdot,b) - [D^0_{(\cdot)} \chi_\pm^{e_\pm}](a,\cdot,b) \right\} \\
            &\quad + \frac{1}{2(d-1)^2} \left\{2 \genmet(e_\pm,e_\pm) \big[\genmet(b,\cdot)\genmet(\cdot,a)-\genmet(\cdot,\cdot)\genmet(b,a)\big] \right. \\
            &\qquad\qquad\qquad + \genmet(\cdot,e_\pm) \big[\genmet(b,a)\genmet(\cdot,e_\pm)-\genmet(\cdot,a)\genmet(b,e_\pm)\big] \\
            &\qquad\qquad\qquad + \left.\left.\genmet(a,e_\pm) \big[\genmet(\cdot,\cdot)\genmet(b,e_\pm)-\genmet(b,\cdot)\genmet(\cdot,e_\pm)\big]\right\}\right\} \\
            &\stackrel{*}{=} \ric(a,b)   - \frac{1}{12}H^{2}(a,b) \\
            &\quad\pm \frac{1}{2(d-1)} \left\{g(b,a)\divergence_g(e_\pm) - g(D^0_b e_\pm,a) +(d-1) g(D^0_b e_\pm,a)\right. \\
            &\qquad\qquad\qquad \left. + (d-1) g(D^0_a e_\pm,b)  +g(a,b)\divergence_g (e_\pm) - g(D^0_a e_\pm,b)\right\} \\
            &\quad + \frac{1}{2(d-1)^2} \left\{2 \absolute{e_\pm}^2_\genmet \big[\genmet(b,a)-d\, \genmet(b,a)\big] + \genmet(b,a)\absolute{e_\pm}_\genmet^2-\genmet(e_\pm,a)\genmet(b,e_\pm)\big] \right.\\
            &\qquad\qquad\qquad + \left.\left.\genmet(a,e_\pm) \big[d\, \genmet(b,e_\pm)-\genmet(b,e_\pm)\big]\right\}\right\} \\
            &\stackrel{\#}{=} \ric(a,b)   - \frac{1}{12}H^{2}(a,b)\pm \frac{g(a,b)}{d-1}\divergence_g(e_\pm) \pm\frac{ d-2}{d-1} [g\nabla e_\pm]^\sym(a,b) \\
            &\quad + \frac{1}{2(d-1)^2} \left\{(3-2d) \absolute{e_\pm}^2_\genmet g(a,b) +(d-2)g(e_\pm,a)g(b,e_\pm)\right\} .\\
        \end{split}
    \end{equation*}
    In $*$, we used that
    \begin{equation*}
        [D^0_u\chi_\pm^{e_\pm}](v,w,x) = g(v,w)g(D^0_u e_\pm,x) - g(v,x)g(D^0_u e_\pm,w),
    \end{equation*}
    for $u,v,w,x \in \Gamma(E_\pm)$. In $\#$, we used that 
    \[ g(D^0_ae_\pm,b) + g(D^0_ae_\pm,b) = 2 g(\nabla e_\pm)^{\mathrm{sym}}(a,b),\]
    see Proposition \ref{D_can_coeff:prop}.
\end{proof}
\begin{corollary}\label{traG:cor}
    The generalised scalar curvature is given by
    \begin{equation}\label{genscaleq}
        \genscal(\genmet, \divergence) = \rscal - \frac{\absolute{H}^2}{12} + \divergence^\genmet(e_+ - e_-) - \frac{1}{2}\absolute{e}_\genmet^2.
    \end{equation}
    Herein, $\divergence = \divergence^\genmet - \scalbrack{e, \cdot}$.
\end{corollary}
\begin{proof}
    We compute with Corollary \ref{genric_puretype_cor}
    \begin{equation*}
        \begin{split}
            \trwith{\genmet}\restr{\genfullric^D}{E_\pm \times E_\pm} &= \rscal - \frac{1}{12}\absolute{H}^2\pm \frac{d}{d-1}\divergence_g(e_\pm) \pm \frac{ d-2}{d-1} \divergence_g(e_\pm) \\
            &\quad + \frac{1}{2(d-1)^2} \left\{(3-2d) \absolute{e_\pm}^2_\genmet d +(d-2)\absolute{e_\pm}^2_\genmet\right\} \\
            &= \rscal - \frac{1}{12}\absolute{H}^2\pm 2 \divergence_g(e_\pm) - \absolute{e_\pm}^2_\genmet .
        \end{split}
    \end{equation*}
    Thus,
    \begin{equation*}
        \begin{split}
            \genscal &= \frac{1}{2}\left\{\trwith{\genmet}\restr{\genfullric^D}{E_+ \times E_+} + \trwith{\genmet}\restr{\genfullric^D}{E_- \times E_-}\right\} \\
            &= \rscal - \frac{1}{12}\absolute{H}^2 +\divergence_g(e_+ - e_-) - \frac{1}{2}\absolute{e}^2_\genmet,
        \end{split}
    \end{equation*}
    as claimed.
\end{proof}
\begin{corollary} \label{trE:cor} The trace of the full generalised Ricci tensor with respect to the scalar product $\langle \cdot, \cdot \rangle$ is given by:
\[ \mathrm{tr}_E\,\genfullric^D=2\divergence_g(e) - (|e_+|^2_\genmet -|e_-|^2_\genmet). \]
\end{corollary}
\begin{proof} This follows from the formulas in the previous proof taking into account that $\mathcal{G}$ and $\langle \cdot ,\cdot\rangle$ differ by a minus sign on $E_-$.  
\end{proof}
\begin{proposition} \label{Kretsch:prop}The following generalised Kretschmann scalar is a (non-trivial) geometric invariant of the pair $(\mathcal G, \mathrm{div})$:
\[ |\genriem^D|^2_{\mathcal{G}} =\mathcal{G}^{II'}\mathcal{G}^{JJ'}\mathcal{G}^{KK'}\mathcal{G}^{LL'}\genriem^D_{IJKL}\genriem^D_{I'J'K'L'}, \]
where the sum is over all components. 
    \end{proposition} 
    \begin{proof}
       The invariance follows from the fact that the generalised connection $D=D^{\mathcal G, \mathrm{div}}$ is 
       canonically associated with the pair $(\mathcal G, \mathrm{div})$. Inspection of the formulas for the 
       generalised Riemann tensor in the case $H=0$, $\mathrm{div}=\mathrm{div}^{\mathcal G}$ show that 
        \[ \left. \left( |\genriem^D|^2_{\mathcal{G}}\right)\right|_{H=0,e=0} = 4|\riem|_g^2,\]
        and therefore the non-triviality of the invariant. 
    \end{proof} 
\begin{corollary}\label{genriccor}
    The mixed-type components of the generalised Ricci curvature are given by
    \begin{equation}\label{genriceq}
        4\restr{\genric(\genmet,\divergence)}{E_\mp \times E_\pm}= 4\ric - H^{2} \mp 2\extd^* H +4 [\nabla\xi]^{\mathrm{sym}} \pm 4[\nabla g X]^{\mathrm{antisym}} \mp 2 H(\xi) .
    \end{equation}
    Herein, $\divergence = \divergence^\genmet -2 \scalbrack{X+\xi, \cdot}$ and the $(0,2)$ tensor $H^2$ is defined by $H^2(X,Y) = \trwith{g} H^{(2)}(X, \cdot, Y, \cdot)$.
\end{corollary}
\begin{proof}
    We first compute the traces over the subspaces $E_+$ and $E_-$. Let $a \in \Gamma(E_\pm)$ and $\Bar{b} \in \Gamma(E_\mp)$. Then
    \begin{equation*}
        \begin{split}
            &2\trwith{E_\pm} \genriem^D(\cdot,\Bar{b},\cdot, a)\\
            &= \trwith{g} \left\{\riem(\cdot,\Bar{b},\cdot,a) \mp \frac{1}{2}[\nabla_{(\cdot)} H](\Bar{b},\cdot,a) \pm \frac{1}{6}[\nabla_{\Bar{b}} H](\cdot,\cdot,a) \right.\\
            &\quad - \frac{1}{12} H^{(2)}(\Bar{b},a,\cdot,\cdot) - \frac{1}{12} H^{(2)}(a,\cdot,\Bar{b},\cdot) - \frac{1}{6} H^{(2)}(\cdot,\Bar{b},\cdot,a) \\
            &\quad \left. \pm \frac{1}{d-1}  [D^0_{\Bar{b}} \chi_\pm^{e_\pm}](\cdot,\cdot,a) \right\} \\
            &= \ric(\Bar{b},a) \mp \frac{1}{2} \extd^*H(\Bar{b},a) - \frac{1}{4} H^2(\Bar{b},a) \pm g(D^0_{\Bar{b}} {e_\pm},a) \\
            &= \ric(\Bar{b},a) \mp \frac{1}{2} \extd^*H(\Bar{b},a) - \frac{1}{4} H^2(\Bar{b},a)  \\
            &\quad\pm  g(\nabla_{\Bar{b}} X,a) + [\nabla_{\Bar{b}} \xi](a) + \frac{1}{2} H(\Bar{b},X,a) \pm \frac{1}{2} H(\Bar{b},\xi,a), \\
        \end{split}
    \end{equation*}
    where in the last line we used that
    \begin{equation*}
        \begin{split}
            \pi D^0_{\Bar{b}} e_\pm  &= \nabla^\pm_{\Bar{b}} \pi e_\pm = \nabla^\pm_{\Bar{b}} (X \pm g^{-1}\xi) \\
            &=\nabla_{\Bar{b}} X \pm g^{-1}\left[\nabla_{\Bar{b}} \xi \pm \frac{1}{2} \left(H(\Bar{b},X) \pm  H(\Bar{b},g^{-1}\xi)\right)\right].
        \end{split}
    \end{equation*}
    Interchanging \enquote{$+$} with \enquote{$-$} and $a$ with $\Bar{b}$, we obtain that
    \begin{equation*}
        \begin{split}
            &2\trwith{E_\mp} \genriem^D(\cdot,a,\cdot, \Bar{b}) \\
            &= \ric(a,\Bar{b}) \pm \frac{1}{2} \extd^*H(a,\Bar{b}) - \frac{1}{4} H^2(a,\Bar{b})\\
            &\quad \mp  g(\nabla_{a} X,\Bar{b}) + [\nabla_{a} \xi](\Bar{b})  + \frac{1}{2} H(a,X,\Bar{b}) \mp \frac{1}{2} H(a,\xi,\Bar{b}) .\\
        \end{split}
    \end{equation*}
    Finally, we compute
    \begin{equation*}
        \begin{split}
            &4\:\genric(\genmet,\divergence)(\Bar{b},a) = 4\:\trwith{E} \genriem^D(\cdot,\Bar{b},\cdot, a) \\
            &= 4\:\trwith{E_\pm} \genriem^D(\cdot,\Bar{b},\cdot, a) + 4\:\trwith{E_\mp} \genriem^D(\cdot,\Bar{b},\cdot, a) \\
            &= 4\:\ric(\Bar{b},a) \mp 2\: \extd^*H(\Bar{b},a) - H^2(\Bar{b},a)  \\
            &\quad\pm  4 [\nabla gX]^\antisym(\Bar{b},a) + 4[\nabla \xi]^\sym(\Bar{b},a) \mp 2 H(\xi,\Bar{b},a). \\
        \end{split}
    \end{equation*}
\end{proof}

\section{Comparison with the physics literature}
\label{Comparison:sec}
In this section we specialise our formulas to two cases which have been considered in the physics literature: 
\begin{enumerate}
    \item the standard setting in string theory and supergravity when $e\in \Gamma (E)\cong \Gamma (\mathbb{T}M)$ 
is an exact one-form  (up to a conventional factor the differential of the dilaton function) and 
\item the vector-deformed setting \cite{Arutyunov:2015mqj}. 
\end{enumerate}
\paragraph{Expressing generalised Ricci and scalar curvature in index notation.}
Here we spell out the above formulas for the generalised Ricci and scalar curvature in terms 
of the vector and co-vector components of $e=2(X+\xi)$. The projections $e_\pm$ can be 
written as 
\[ e_\pm = X_\pm \pm gX_\pm,\quad X_\pm \in \Gamma (TM). \]
Solving for $X$ and $\xi$ we have 
\begin{eqnarray*}
    2X &=& X_+ + X_-,\\
    2 \xi &=& g(X_+-X_-).
\end{eqnarray*}
The section $e_+-e_-\in \Gamma (E)$ can be written in terms of $X$ and $\xi$ as
\[ 
e_+-e_- = X_+-X_- +g(X_++X_-)= 2(g^{-1}\xi +gX). 
\] 
This implies that 
\[ \mathrm{div}^\mathcal{G}(e_+-e_-)= 2\mathrm{div}^g(g^{-1}\xi) = 2 \nabla_\mu \xi^\mu. \]
The square-norm $|e|_{\mathcal G}^2$ is given in terms of $X$ and $\xi$ by
\[ |e|_{\mathcal G}^2 = |e_+|_{\mathcal G}^2 + |e_-|_{\mathcal G}^2= |X_+|^2_g + |X_-|^2_g = 
2(|X|^2_g+|\xi|_g^2),\]
where we have used that 
\begin{eqnarray*}|2X|^2_g &=& |X_++X_-|^2_g = |X_+|^2_g+|X_-|_g^2 + 2g(X_+,X_-),\\
|2\xi|^2_g &=& |X_+-X_-|^2_g = |X_+|^2_g+|X_-|_g^2 - 2g(X_+,X_-). 
\end{eqnarray*}
Similarly, 
\[ -\frac12 (|e_+|_{\mathcal G}^2 - |e_-|_{\mathcal G}^2)=
-\frac12 (|X_+|^2_g-|X_-|_g^2) = -2\xi (X),\]
since $X_\pm = X \pm g^{-1}\xi$. Now we can write the two generalised scalar curvatures in index notation 
\begin{eqnarray*} 
          \genscal(\genmet, \divergence) = \frac12 \mathrm{tr}_{\mathcal G}\,\genfullric^D &=&  \rscal - \frac{1}{12}H_{\mu \nu \rho}H^{\mu \nu \rho} + 2 \nabla_\mu \xi^\mu - X^\mu X_\mu -\xi^\mu \xi_\mu,\\
        \frac12 \mathrm{tr}_E\,\genfullric^D &=& 2\divergence_g(X) - 2\xi (X) = 2(\nabla_\mu X^\mu - \xi_\mu X^\mu).
    \end{eqnarray*}
    Next we write the generalised Ricci tensor in index notation:
    \begin{eqnarray*} 
        4\genric(\genmet,\divergence)^{\pm}_{\mu \nu} &=& 4\ric_{\mu \nu} - H_{\mu \rho \sigma}H_\nu^{\;\; \rho \sigma}
        +4 \nabla_{(\mu} \xi_{\nu )}\\   
        &&\pm 2\nabla^\rho H_{\rho \mu \nu}  \pm 4\nabla_{[ \mu} X_{\nu ]} \mp 2 H_{\rho \mu \nu }\xi^\rho, 
    \end{eqnarray*}
    where we have separated the symmetric and the skew-symmetric parts.
    \paragraph{Specialisation to the standard string theory setting.}
    Specialising to $\xi_\mu = 2\partial_\mu \varphi$ and $X^\mu=0$ we recover the formulas  
    \[ \genscal(\genmet, \divergence) = \rscal - \frac{1}{12}H_{\mu \nu \rho}H^{\mu \nu \rho} + 4 \nabla_\mu \nabla^\mu \varphi  -4\partial_\mu \varphi \partial^\mu \varphi\]
    and 
    \begin{eqnarray*} 
        \genric(\genmet,\divergence)^{\pm}_{\mu \nu} &=& \ric_{\mu \nu} - \frac14 H_{\mu \rho \sigma}H_\nu^{\;\; \rho \sigma}
        +2 \nabla_{\mu} \partial_{\nu} \varphi    
        \pm \frac12\nabla^\rho H_{\rho \mu \nu}  \mp  H_{\rho \mu \nu }\partial^\rho \varphi\\  
        &=& \ric_{\mu \nu} - \frac14 H_{\mu \rho \sigma}H_\nu^{\;\; \rho \sigma}
        +2 \nabla_{\mu} \partial_{\nu} \varphi    
        \pm \frac12 e^{2 \varphi} \nabla^\rho (e^{-2\varphi} H_{\rho \mu \nu}), 
    \end{eqnarray*}
     in concordance with \cite{Coimbra:2011nw}. The field equations of the NS-NS sector of type-IIA/B supergravity, and, equivalently, 
     the equations expressing the vanishing of the beta-functions for the NS-NS sector of critical type-IIA/B superstring theory to leading order in $\alpha'$ are the generalised Einstein equations $\genric(\genmet,\divergence)^{\pm}=0$
     and $\genscal(\genmet, \divergence)=0$.  We note that the condition $X=0$ imposed on $(X,\xi)$ implies that $\mathrm{tr}_E\,\genfullric^D=0$.
\paragraph{Specialisation to vector-deformed supergravity.}
     Next we compare with the corresponding formulas in vector-deformed supergravity, where the conditions imposed on the pair $(X,\xi)$ are relaxed with respect to the standard case, as follows:
     firstly, it is assumed that the pair $(X,\xi)$ is compatible, i.e.\ $X$ is Killing and $d\xi = H(X)$. Secondly, one imposes that 
     $\xi (X)=0$.\footnote{For completeness we mention that in \cite{Arutyunov:2015mqj} the further assumption is made that 
     there exists a two-form $B$, with $H=dB$, which is invariant under the action generated by $X$, $\mathcal{L}_XB=0$. Then the one-form $\xi$ is a linear combination of the one-form $BX$ and
     an exact one-form, which is interpreted as the differential $d\varphi$ of the dilaton. These assumptions do not play a role in the context of our paper, since all our expressions are given in terms of the closed three-form $H$.} We first rewrite the generalised scalar curvature by expressing
     $2 \nabla_\mu \xi^\mu - X^\mu X_\mu -\xi^\mu \xi_\mu$ in terms of $\tilde{X}^\mu=\frac12 (-X^\mu +\xi^\mu )$:
     \[ 2\nabla_\mu \xi^\mu - X^\mu X_\mu -\xi^\mu \xi_\mu = 2 \nabla_\mu (-X^\mu + \xi^\mu)-(-X+\xi)^\mu (-X+\xi)_\mu
     =4 \nabla_\mu\tilde{X}^\mu -4 \tilde{X}^\mu\tilde{X}_\mu.\]
     As a consequence, 
     \[ \genscal(\genmet, \divergence) = \rscal - \frac{1}{12}H_{\mu \nu \rho}H^{\mu \nu \rho} + 4 \nabla_\mu\tilde{X}^\mu -4 \tilde{X}^\mu\tilde{X}_\mu .\]
     Comparing with \cite{Arutyunov:2015mqj} we see that $\genscal(\genmet, \divergence)=0$ is one of the field equations 
     of vector-deformed supergravity. Now we rewrite the symmetric part of $\genric(\genmet,\divergence)^{\pm}_{\mu \nu}$ as 
     \begin{eqnarray*} \genric(\genmet,\divergence)^{\pm}_{(\mu \nu )} &=& \ric_{\mu \nu} - \frac14 H_{\mu \rho \sigma}H_\nu^{\;\; \rho \sigma}
        + \nabla_{(\mu} \xi_{\nu )} = \ric_{\mu \nu} - \frac14 H_{\mu \rho \sigma}H_\nu^{\;\; \rho \sigma}
        + 2\nabla_{(\mu} \tilde{X}_{\nu )}
     \end{eqnarray*}
     and the skew-symmetric part as 
     \begin{eqnarray*} \genric(\genmet,\divergence)^{\pm}_{[\mu \nu ]} &=&\pm \frac12 \nabla^\rho H_{\rho \mu \nu}  \pm \nabla_{[ \mu} X_{\nu ]} \mp \frac12  H_{\rho \mu \nu }\xi^\rho\\
     &=& \pm \frac12 \nabla^\rho H_{\rho \mu \nu} \mp 2 \nabla_{[\mu} \tilde{X}_{\nu]} \mp H_{\rho \mu \nu}\tilde{X}^\rho.\end{eqnarray*}
     Comparing with \cite{Arutyunov:2015mqj} we see that $\genric(\genmet,\divergence)^{\pm}=0$ are the remaining two field equations of vector-deformed supergravity. 
     We note that $\mathrm{tr}_E\,\genfullric^D =0$ under the assumptions underlying vector-deformed supergravity. 
     In fact, $X$ is Killing and therefore divergence-free, and $\xi (X)=0$ is another assumption. 

\cleardoublepage

\bibliographystyle{unsrturl}  
\bibliography{literatur}

V.\ Cort\'es: vicente.cortes@uni-hamburg.de

Department of Mathematics, University of Hamburg,  Bundesstr.\ 55, 20146  Hamburg, Germany.

M.\ Mackevicius: Matas.Mackevicius@liverpool.ac.uk

Department of Mathematical Sciences, University of Liverpool, Peach Street, Liverpool L69 7ZL, UK

T.\ Mohaupt: Thomas.Mohaupt@liverpool.ac.uk

Department of Mathematical Sciences, University of Liverpool, Peach Street, Liverpool L69 7ZL, UK

O.\ Schiller: 
oskar.schiller@uni-hamburg.de

Department of Mathematics, University of Hamburg,  Bundesstr.\ 55, 20146  Hamburg, Germany.
\cleardoublepage

\end{document}